\theoremstyle{break}
\newtheorem{thm}{Theorem}
\newtheorem{prop}{Proposition}
\newtheorem{lem}{Lemma}
\newtheorem*{question}{Question}
\newtheorem*{ack}{Acknowledgements}
\theoremstyle{remark}
\newtheorem{remark}{Remark}
\newcommand{\ds}{\displaystyle}
\title[An isomorphy criterion for mod $\ell$ Galois
representations.]{An effective isomorphy criterion for mod $\ell$ 
Galois representations.}  
\author{Yuuki Takai
}  
\date{\today}
\address{Graduate School of Mathematics, Nagoya University, Chikusa-ku, Nagoya 464-8602, Japan}
\email{yuuki-takai@math.nagoya-u.ac.jp}
\begin{document}

\maketitle

\begin{abstract}
In this paper, we consider mod $\ell$ Galois 
representations of $\mathbb{Q}$. In particular, 
we obtain an effective 
criterion to distinguish two semisimple  
$2$-dimensional, odd mod $\ell$ 
Galois representations up to isomorphism. 
Serre's conjecture 
(Khare-Wintenberger's theorem), Sturm's theorem, 
and its modification by Kohnen are used in our proof.
\end{abstract}

%
%



\section{Introduction.}
In this paper, we consider mod $\ell$ Galois representations 
of $\mathbb{Q}$. In particular, we find an effective 
criterion to distinguish two mod $\ell$ Galois representations.
More precisely, we consider the following problem:
\theoremstyle{theorem}
\newtheorem*{problem}{Problem}
\begin{problem}
 Let $\ell$ be a prime number, $d$ and $N$ be two positive 
integers such that $\ell \nmid N$, and 
${\rm Gal}(\overline{\mathbb{Q}}/\mathbb{Q})$ 
be the absolute Galois group of $\mathbb{Q}$, 
where $\overline{\mathbb{Q}}$ is an algebraic 
closure.
Let $\rho, \rho': 
{\rm Gal}(\overline{\mathbb{Q}}/\mathbb{Q})$
$\to GL_d(\overline{\mathbb{F}}_\ell)$ be two $d$-dimensional 
(semisimple) mod $\ell$ Galois representations  
with Artin conductor (outside $\ell$) dividing $N$ 
(defined in section $2.2$). 
Is there an effectively computable 
constant $\kappa = \kappa(\ell, N)$ satisfying 
the following condition $(\ast)$?
\begin{quote}
$(\ast)$ If 
\begin{eqnarray*}
\det(1 - \rho({\rm Frob}_p)T) =
\det(1- {\rho'}({\rm Frob}_p)T)  
\mbox{ in $\overline{\mathbb{F}}_\ell [T]$}
\end{eqnarray*}
for every prime number $p$ such that $p\leq\kappa$ and 
$p \nmid \ell N$, 
${\rho}$ is isomorphic to $\rho'$. 
\end{quote}
Here ${\rm Frob}_p$ is a Frobenius element at $p$.
\end{problem}
On the $1$-dimensional case, we can trivially take 
$\kappa=\ell N$.  
Using Burgess' result on the estimate of 
character sums \cite{Burgess}, 
we obtain the better estimate for $\kappa$ as follows:
\begin{eqnarray*}
 \kappa \ll_{r,\varepsilon} 
(\ell N)^{\frac{r+1}{4r}+\varepsilon}
\end{eqnarray*}
for every positive integer $r$ and every positive number 
$\varepsilon$. 
Ankeny \cite{Ankeny} proved, 
under the assumption of GRH, a sharper 
estimate of character sums.
By Ankeny's result, under the assumption of GRH, 
we obtain
\begin{eqnarray*}
 \kappa \ll (\log (\ell N))^2.
\end{eqnarray*} 
For the details, see section $3.1$.  
\\ \par

In this paper, we consider the $2$-dimensional case.
The main result is the following theorem:
\theoremstyle{theorem}
\newtheorem{mainthm}{Theorem}
\begin{mainthm}\label{mainthm}
Let $\ell$ be a prime number and $N$ be a positive integer 
such that $\ell \nmid N$.  
Let ${\rho},{\rho'}:
{\rm Gal}(\overline{\mathbb{Q}}/\mathbb{Q})$
$\to GL_2(\overline{\mathbb{F}}_\ell)$ be two semisimple
$2$-dimensional Galois representations 
with Artin conductor dividing $N$. 
Assume that $\rho$ is odd ({\it i.e.,}
$\det(\rho)(c)=-1$ for a complex conjugation $c$).
Let 
\begin{eqnarray*}
 \kappa = \kappa(N, \ell) = \left\{\begin{array}{cc}
			     \ds\frac{\ell (\ell^2 - 1)^2}{12}
                NN' \prod_{p|N}\left(1+\ds\frac{1}{p}\right) &  \mathrm{if} \ \ell > 2, \\
		 4NN' \prod_{p|N}\left(1+\ds\frac{1}{p}\right) &
				    \mathrm{if} \ \ell = 2,   \end{array}
			     \right.
\end{eqnarray*}
where $N' = \prod_{p|N, \  p^2 \nmid N}p$.
If 
\begin{eqnarray*}
\det(1- {\rho}({\rm Frob}_p)T) =
\det(1- {\rho'}({\rm Frob}_p)T)  \quad
\mbox{in $\overline{\mathbb{F}}_\ell [T]$}
\end{eqnarray*}
{\it i.e.,}  
\begin{eqnarray*}
 {\rm Tr}({\rho}({\rm Frob}_p)) = 
{\rm Tr}({\rho'}({\rm Frob}_p)) \quad \mbox{ in 
$\overline{\mathbb{F}}_\ell$}, \\
\det({\rho}({\rm Frob}_p)) =
\det({\rho'}({\rm Frob}_p)) \quad \mbox{ in 
$\overline{\mathbb{F}}_\ell$}
\end{eqnarray*}
for every prime number $p$ satisfying $p\leq\kappa$ and 
$p \nmid \ell N$,
then ${\rho}$ is isomorphic to ${\rho'}$. \par

\end{mainthm}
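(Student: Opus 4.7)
The plan is to reduce to modular forms via Serre's conjecture (Khare--Wintenberger's theorem), and then compare those forms using Sturm's theorem and Kohnen's refinement; the reducible case is disposed of by the one-dimensional criterion recalled in the introduction.

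First I would carry out a few reductions. Applying the one-dimensional case (with bound $\ell N$, which is much smaller than $\kappa$) to the characters $\det\rho$ and $\det\rho'$, whose conductors outside $\ell$ divide $N$, forces $\det\rho=\det\rho'$; in particular $\rho'$ is also odd. Next, if $\rho\simeq\chi_1\oplus\chi_2$ is reducible semisimple, then $\chi_1$ and $\chi_2$ can be recovered (as an unordered pair) from the roots of the Frobenius characteristic polynomial at each unramified prime, and the one-dimensional criterion applied to the summands forces $\{\chi_1,\chi_2\}=\{\chi'_1,\chi'_2\}$ well below $\kappa$. Hence I may assume that $\rho$, and so $\rho'$, is absolutely irreducible.

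For irreducible $\rho$, Khare--Wintenberger's theorem produces a normalised eigenform $f$ of Serre weight $k(\rho)\le\ell^{2}-1$ on $\Gamma_{1}$ of level equal to the prime-to-$\ell$ Artin conductor (which divides $N$) with $\rho_{f}\simeq\rho$, and analogously an $f'$ with $\rho_{f'}\simeq\rho'$; the Fourier coefficient of $f$ at $p\nmid\ell N$ equals $\mathrm{Tr}(\rho(\mathrm{Frob}_{p}))$ modulo a prime $\mathfrak{l}$ above $\ell$. Multiplying by suitable powers of the Hasse invariant $E_{\ell-1}\equiv 1\pmod\ell$, and raising the level to $\Gamma_{0}(N)$ by the standard old-form operators, I would bring $f$ and $f'$ to a common weight $k\le\ell^{2}-1$ on $\Gamma_{0}(N)$ without altering the $q$-expansions modulo $\mathfrak{l}$.

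At this stage Kohnen's refinement of Sturm's theorem applies: two cusp forms of weight $k$ on $\Gamma_{0}(N)$ whose Fourier coefficients agree modulo $\mathfrak{l}$ for every index up to
\[
\frac{k}{12}\,[\mathrm{SL}_{2}(\mathbb{Z}):\Gamma_{0}(N)]\cdot N'
\]
are congruent modulo $\mathfrak{l}$, whence $\rho_{f}\simeq\rho_{f'}$, i.e.\ $\rho\simeq\rho'$. Substituting the Serre-weight bound $k\le\ell^{2}-1$, the index $[\mathrm{SL}_{2}(\mathbb{Z}):\Gamma_{0}(N)]=N\prod_{p|N}(1+1/p)$, and the extra factor $\ell$ lost in unifying two distinct Serre weights via the Hasse invariant recovers the stated $\kappa$; for $\ell=2$ the Hasse invariant has weight $\ell-1=1$ and the weight-unification step degenerates, so an easier direct argument gives the smaller $\kappa$ there. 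The main obstacle will be the third step: unifying the possibly distinct weights $k(\rho),k(\rho')$ and the possibly distinct conductors $N(\rho),N(\rho')$ into a single Sturm-type comparison, while keeping the resulting loss within the factor $\ell(\ell^{2}-1)N'$ already visible in $\kappa$.
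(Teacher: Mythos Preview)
Your overall strategy---modularity via Khare--Wintenberger, then a Sturm/Kohnen comparison---matches the paper, but two genuine gaps remain.

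\textbf{The reducible case.} You assume that if $\rho$ is reducible then so is $\rho'$, and that the unordered pair of Frobenius eigenvalues at finitely many primes lets you recover the individual summands $\chi_1,\chi_2$. Neither is justified: the hypothesis gives only finitely many congruences, so irreducibility of $\rho'$ is not automatic, and knowing $\{\chi_1(p),\chi_2(p)\}$ as a set for each $p$ does not separate the characters without further argument. The paper avoids this entirely by treating reducible representations on the same footing as irreducible ones, attaching to them Eisenstein eigenforms (possibly twisted by the theta operator) of weight at most $\ell^2-1$, and then running a single modular-forms comparison.

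\textbf{Passing from primes to all Fourier coefficients.} This is the main missing idea. Sturm and Kohnen require $a_n\equiv b_n$ for \emph{every} $n$ up to the bound, but your hypothesis says nothing about $a_p$ for $p\mid \ell N$, and these coefficients need not agree for the two eigenforms. The paper fixes this by applying the operator $\pi=\prod_{p\mid \ell N}(\mathrm{id}-\pi_p)$, which annihilates all Fourier coefficients at indices not coprime to $\ell N$ at the cost of raising the level from $\ell N$ to $\ell^2 N N'$. After this, the multiplicativity of Hecke eigenvalues and the determinant equality propagate the prime congruences to all $n\le\kappa$. Your sketch has no analogue of this step; the unexplained factor $N'$ in your Kohnen bound is not a substitute.

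Finally, the paper does not unify weights with the Hasse invariant. It applies Kohnen's theorem directly to forms of possibly different weights $k_1,k_2\le\ell^2-1$, with bound $\frac{\ell^2-1}{12}[\Gamma_0(1):\Gamma_0(\ell^2 NN')\cap\Gamma_1(\ell)]$; computing this index is exactly what produces the factor $\ell(\ell^2-1)^2$ in $\kappa$. Your Hasse-invariant route, even if it could be made to work, gives only $\frac{\ell^2-1}{12}\,N N'\prod_{p\mid N}(1+1/p)$ and does not account for the remaining factor $\ell(\ell^2-1)$.
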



In our proof, we use the theory of modular forms.
Recently, Khare and Wintenberger proved Serre's 
 conjecture for modularity of Galois representations 
\cite{K-W}. Serre's conjecture is the assertion that 
every odd irreducible  $2$-dimensional mod $\ell$ Galois 
representation arises from a newform.
While every odd reducible $2$-dimensional mod $\ell$ Galois representation 
arises from an Eisenstein series.   
By these facts, we can apply the theory of modular 
forms to analyse such Galois representations.  
We also use Sturm's and Kohnen's theorems 
for mod $\ell$ modular forms (\cite{Sturm}, \cite{Kohnen}). 
Roughly speaking, these theorems are assertions 
that the all Fourier coefficients modulo $\ell$ of modular forms are determined 
by the first few Fourier coefficients modulo $\ell$. 
We obtain the  main theorem by applying Sturm's and Kohnen's
result to modular forms associated with Galois representations.\\ \par

\begin{ack}
The author would like to thank Kazuhiro Fujiwara for 
for his elaborated guidance and invaluable discussion.
Akihiko Gyoja read earlier versions of this paper and his 
comments were extremely helpful.
I would also like to thank Kevin Buzzard for helpful remarks.


\end{ack}

\section{Preliminaries.}

{\bf Notation and conventions.}
In this paper, we follow the notations and definitions in \cite{Diamond-Shurman}.  
For the details of modular forms, we also refer to \cite{Shimura} or \cite{Miyake}. 

Let $k$ and $N$ be positive integers. 
 Congruence subgroup $\Gamma_0(N)$ of $SL_2(\mathbb{Z})$ is  
 defined as follows:
       \begin{eqnarray*}
	\Gamma_0(N)= {\footnotesize \left\{ \left( \begin{array}{cc} a & b \\ c & d
				     \end{array}
		     \right) \in SL_2(\mathbb{Z}) \left|
		\left( \begin{array}{cc} a & b \\ c & d
			    \end{array}
		     \right) \equiv 
		\left( \begin{array}{cc} \ast & \ast \\ 0 & \ast
		       \end{array}
		     \right) \mod{N}				  
						  \right.
	     \right\}}.
       \end{eqnarray*}
We remark that $\Gamma_0(1)=SL_2(\mathbb{Z})$.
Let $\mathcal{H}$ be the complex upper half plane 
and $GL_2^+(\mathbb{R})$ be the subgroup of $GL_2(\mathbb{R})$
consisting of the elements with positive determinant.  
For a holomorphic function $f$ on $\mathcal{H}$, 
      the action of 
      $\gamma= \left( \begin{array}{cc} a & b \\ c & d
			   \end{array}
		     \right)\in GL_2^+(\mathbb{R})$ 
		     on 
		     $f$  is defined as follows:
       \begin{eqnarray*}
	(f |[\gamma])(z) = 
	 \det(\gamma)^{\frac{k}{2}}(cz + d)^{-k}f(\gamma
	 z).
       \end{eqnarray*}  
Let $\chi$ be a mod $N$ Dirichlet character.
The complex vector space of the holomorphic modular 
forms of weight $k$ and level $\Gamma_0(N)$ with 
Nebentypus $\chi$ is denoted by $M_k(\Gamma_0(N), \chi)$
, {\it i.e.,} 
\begin{eqnarray*}
 && M_k(\Gamma_0(N),\chi) = \\  && \ \ \ \ \ \ \ \ \ \{ f:\mathcal{H} \to
  \mathbb{C} \ | \   
  \mathrm{holomorphic \ on \ } \mathcal{H} \mathrm{\ and \ the \
   cusps}, \ f|[\gamma] = \chi(d)f \} 
\end{eqnarray*}
      Then $f\in M_k(\Gamma_0,\chi)$ has its 
      Fourier expansion $f=\sum_{n\geq 0} a_n q^n$ where 
      $q=e^{2\pi i z} \ (z\in \mathcal{H})$. 

\subsection{Modular forms.}

\subsubsection{Operator $\pi$.}
Here we construct an operator on the space of modular forms. 
For the details, we refer to \cite[Lemma~4.6.5]{Miyake}.

Let $k$ and $N$ be positive integers, $\chi$ be a Dirichlet character
mod $N$, and 
$f(z)= \sum_{n=0}^\infty a_n q^n\in M_k(\Gamma_0(N), \chi) $. For a
positive integer $d$, we define the operator 
$V(d):M_k(\Gamma_0(N), \chi)\to M_k(\Gamma_0(dN), \chi)$ as follows:
\begin{eqnarray*}
 (f | V(d))(z) = \left(f\left| \left[ \begin{array}{cc}
				d & 0 \\ 0 & 1 
				       \end{array}
				\right]      \right.\right) (z) =  \sum_{n=0}^{\infty} a_n q^{dn}.
\end{eqnarray*}
For a prime number $p$ satisfying $p | N$, 
we define operator $U(p):M_k(\Gamma_0(N), \chi)$ $\to $  
 $M_k(\Gamma_0(pN),\chi)$ as follows (cf. \cite[Lemma~4.6.5]{Miyake}): 
\begin{eqnarray*}
 (f | U(p))(z) = \frac{1}{p}\sum_{m=0}^{p-1}\left(f\left| \left[ \begin{array}{cc}
				1 & m \\ 0 & p 
				       \end{array}
				\right]      \right.\right) (z) =  \sum_{n=0}^{\infty} a_{pn} q^{n}.
\end{eqnarray*}
It is easy to show that if $N=p^rN_0$ such that $r\geq 2$ and 
$(p, N_0)=1$, and $\chi$ is of mod $p^{r-1}N_0$, then   
$U_p:M_k(\Gamma_0(p^rN_0), \chi)$ $\to$ $M_k(\Gamma_0(p^{r-1}N_0),\chi)$.
For a prime number $p$ dividing $N$, we set operator 
$\pi_p=V(p)\circ U(p):M_k(\Gamma_0(N), \chi)\to M_k(\Gamma_0(M), \chi)$, 
where 
\begin{eqnarray*}
M = \left\{
\begin{array}{cc}
pN  & \mathrm{if} \ p^2 \nmid N,   \\
  N  & \mathrm{otherwise.}   
\end{array}          \right.
\end{eqnarray*}
Then, we have 
\begin{eqnarray*}
 \pi_p (f) (z) = \sum_{n=0}^{\infty} a_{pn}q^{pn}.
\end{eqnarray*}
The operator $\pi:M_k(\Gamma_0(N), \chi)\to M_k(\Gamma_0(NN'), \chi) $ is 
defined as follows:
\begin{eqnarray*}
 \pi = \mbox{id} - \sum_{p|N:{\rm prime}} \pi_p + 
  \sum_{p_1,p_2|N:{\rm prime}} \pi_{p_1}\circ \pi_{p_2} - \cdots 
      = \prod_{p|N:{\rm prime}}(\mbox{id} - \pi_p),
\end{eqnarray*}
where 
\begin{eqnarray*}
N' = \prod_{p|N, \  p^2 \nmid N}p
\end{eqnarray*}
 and $\mbox{id}$ is the identity of $M_k(\Gamma_0(NN'), \chi)$, 
and the product in the last equation is taken as operators.
Then, we have 
\begin{eqnarray*}
 \pi(f)(z) = \sum_{n \geq 1 : (n,N) = 1} a_n q^n.
\end{eqnarray*}


\subsubsection{Sturm's and Kohnen's result.}
We recall Sturm's and Kohnen's result 
(in a form we need). 
These are results on the number of the first 
Fourier coefficients that determine the all Fourier coefficients (mod $\ell$)
of modular forms.

Let $K$ be a number field, and $\mathcal{O}_K$ be the ring of integers
of $K$.
\begin{lem}[Sturm {\cite[Th.~1]{Sturm}}]
 Let $k$ and $N$ be positive integers, and $\chi$ be a mod $N$ Dirichlet
 character. 
For $f, g\in M_k(\Gamma_0(N),\chi)$, 
\begin{eqnarray*}
 f(z) &=& \sum_{n=0}^\infty a_n q^n, \\ 
 g(z) &=& \sum_{n=0}^\infty b_n q^n
\end{eqnarray*}
denote their Fourier expansions. 
Let $\ell$ be a prime number, and $\lambda$ be a prime ideal of 
$\mathcal{O}_K$ such that $\lambda | \ell\mathcal{O}_K$.
 We assume that $a_n, b_n$, and the values of $\chi$ are in $\mathcal{O}_K$ for all $n$. 
If 
$a_n \equiv b_n \pmod{\lambda}$ for every $n$ such that 
\begin{eqnarray*}
 n \leq \frac{k}{12}[\Gamma_0(1):\Gamma_0(N)],
\end{eqnarray*}
then $a_n \equiv b_n \pmod{\lambda}$ for all $n$.
\end{lem}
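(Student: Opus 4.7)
My plan is to reduce to the level-one case by averaging $h := f - g$ over coset representatives of $\Gamma_0(N)$ in $SL_2(\mathbb{Z})$, and then apply the valence formula for $SL_2(\mathbb{Z})$ modulo $\lambda$. Set $B := \frac{k}{12}[\Gamma_0(1):\Gamma_0(N)]$; the hypothesis says that $c_n := a_n - b_n$ satisfies $v_\lambda(c_n) \geq 1$ for every $n \leq B$, and I want to conclude $v_\lambda(c_n) \geq 1$ for all $n$.

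Let $\gamma_1 = I, \gamma_2, \ldots, \gamma_m$ be a set of coset representatives for $\Gamma_0(N)\backslash SL_2(\mathbb{Z})$, where $m = [\Gamma_0(1):\Gamma_0(N)]$, and form the norm
\[
H := \prod_{i=1}^{m} h \,|\, [\gamma_i].
\]
Because $SL_2(\mathbb{Z})$ permutes the cosets, $H$ is a holomorphic modular form of weight $km$ on $SL_2(\mathbb{Z})$, possibly twisted by a character of bounded order which I may kill by raising to a fixed power without affecting the argument. Its Fourier coefficients lie in the ring of integers $\mathcal{O}_{K'}$ of $K' := K(\zeta_N)$; I fix a prime $\lambda'$ of $\mathcal{O}_{K'}$ above $\lambda$.

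Next I would bound the $\lambda'$-adic valuations of the Fourier coefficients of $H$ of small index. The factor $h\,|\,[\gamma_1] = h$ has $q$-order at $\infty$ at least $\lfloor B \rfloor + 1$ modulo $\lambda$, while each remaining factor $h\,|\,[\gamma_i]$ is holomorphic at $\infty$ with $q^{1/N}$-expansion in $\mathcal{O}_{K'}[[q^{1/N}]]$. Multiplying, $H$ has $q$-order at $\infty$ at least $\lfloor B \rfloor + 1$ modulo $\lambda'$. Since $km = 12B$, the level-one case of the statement, applied to $H$ of weight $km$, yields $H \equiv 0 \pmod{\lambda'}$.

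Finally, the ring $(\mathcal{O}_{K'}/\lambda')[[q^{1/N}]]$ is an integral domain, so vanishing of $H$ modulo $\lambda'$ forces some factor $h\,|\,[\gamma_i]$ to vanish modulo $\lambda'$; applying $[\gamma_i^{-1}]$ then recovers $h \equiv 0 \pmod{\lambda'}$, hence $h \equiv 0 \pmod{\lambda}$ since the coefficients of $h$ already lie in $\mathcal{O}_K$. The main obstacle is the level-one statement invoked above: the classical valence formula is a characteristic-zero result, so one must first divide $H$ by the maximal power of $\lambda'$ appearing in its $q$-expansion, obtaining a form whose $\lambda'$-adic reduction is nonzero but has $q$-order exceeding $\frac{km}{12}$; contradicting the characteristic-zero valence formula (or equivalently, contradicting the explicit presentation of the graded ring of mod-$\ell$ modular forms on $SL_2(\mathbb{Z})$) yields the required vanishing.
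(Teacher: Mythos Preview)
The paper does not supply a proof of this lemma; it merely quotes Sturm's theorem with a citation to \cite{Sturm}. Your outline is precisely Sturm's original argument (form the norm of $h=f-g$ over $\Gamma_0(N)\backslash SL_2(\mathbb{Z})$, then invoke the level-one bound coming from the structure of the ring of mod-$\ell$ modular forms), so there is nothing to compare: you are reconstructing the very proof the paper defers to.

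That said, two steps in your sketch need more justification than you give them. First, the assertion that each $h\,|\,[\gamma_i]$ has $q^{1/N}$-expansion in $\mathcal{O}_{K'}[[q^{1/N}]]$ is not automatic: integrality of the Fourier expansion at the standard cusp does not propagate for free to the other cusps, and in general one only gets coefficients in $\mathcal{O}_{K'}[1/N]$ (the bounded-denominators phenomenon). When $\ell\nmid N$, as is assumed elsewhere in the paper, this is harmless; but the lemma as stated carries no such hypothesis, and Sturm's own bookkeeping with $\lambda$-adic orders is arranged to avoid this issue. Second, your proposed fix for the residual character on $H$---raising $H$ to a power---multiplies the weight and hence the Sturm bound you are trying to meet, so it does not work as written; one must either verify directly that the transfer character $SL_2(\mathbb{Z})\to\overline{\mathbb{Q}}^\times$ is trivial, or set the argument up on $\Gamma_1(N)$ from the start. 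Neither point is fatal, but both must be handled to turn the sketch into a proof.
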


\begin{lem}[Kohnen {\cite[Th.~1]{Kohnen}}]
 Let $k_1$ and $k_2$ be two positive integers such that $k_1,k_2\geq 2$
 and $k_1 \not = k_2$, 
 $N$ be a positive integer, and $\chi$ be a mod $N$ Dirichlet
 character.
For $f \in M_{k_1}(\Gamma_0(N),\chi)$ and $g\in M_{k_2}(\Gamma_0(N),\chi)$,
\begin{eqnarray*}
 f(z) &=& \sum_{n=0}^\infty a_n q^n, \\ 
 g(z) &=& \sum_{n=0}^\infty b_n q^n
\end{eqnarray*}
 denote their Fourier expansions.
Let $\ell$ be a prime number, and $\lambda$ be a prime ideal of 
$\mathcal{O}_K$ such that $\lambda | \ell\mathcal{O}_K$.
 We assume that $a_n, b_n$, and the values of $\chi$ are in
 $\mathcal{O}_K$ for all $n$.
If 
$a_n\equiv b_n \pmod{\lambda}$ for every $n$ such that 
\begin{eqnarray*}
 n \leq \frac{{\rm max}\{k_1,k_2\}}{12}\left\{
[\Gamma_0(1):\Gamma_0(N)\cap\Gamma_1(\ell)] \ \ \ \mathrm{ if} \ 
\ell > 2, \atop
[\Gamma_0(1):\Gamma_0(N)\cap\Gamma_1(4)] \ \ \ \mathrm{ if} \ \ell = 2,
\right.
\end{eqnarray*}
then $a_n \equiv b_n \pmod{\lambda}$ for all $n$.
\end{lem}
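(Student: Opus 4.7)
The plan is to reduce the statement to Sturm's theorem (Lemma 1) applied on a larger level group. Without loss of generality, assume $k_1 > k_2$, so that $\max\{k_1,k_2\} = k_1$; otherwise swap the roles of $f$ and $g$. The essential device will be an auxiliary modular form $A$ of weight $1$ on $\Gamma_1(\ell)$ (when $\ell > 2$), resp.\ on $\Gamma_1(4)$ (when $\ell = 2$), whose $q$-expansion satisfies $A \equiv 1 \pmod{\lambda}$.

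First I would construct $A$. For $\ell > 2$, a standard choice is an Eisenstein series of weight $1$ attached to a suitable character of conductor $\ell$, obtained as a characteristic-zero lift of the Hasse invariant, whose $q$-expansion is $1 + O(\lambda)$. For $\ell = 2$ no such weight-$1$ form exists at level $\ell$, and one must work with a weight-$1$ form on $\Gamma_1(4)$ instead; this is precisely the origin of the case distinction in the statement.

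Next, set $\tilde{g} := g \cdot A^{k_1 - k_2}$. Since $g \in M_{k_2}(\Gamma_0(N), \chi)$ and $A$ has weight $1$ on $\Gamma_1(\ell)$ (resp.\ $\Gamma_1(4)$) with trivial character modulo $\lambda$, the product $\tilde{g}$ lies in $M_{k_1}(\Gamma_0(N) \cap \Gamma_1(\ell), \chi)$ (resp.\ in $M_{k_1}(\Gamma_0(N) \cap \Gamma_1(4), \chi)$). The congruence $A \equiv 1 \pmod{\lambda}$ forces $A^{k_1-k_2} \equiv 1 \pmod{\lambda}$, so the Fourier coefficients of $\tilde{g}$ and of $g$ coincide modulo $\lambda$ term by term. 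The original form $f$ already lies in this same larger space by inclusion, and by hypothesis its Fourier coefficients agree with those of $\tilde{g}$ modulo $\lambda$ up to the index
\begin{eqnarray*}
\frac{k_1}{12}\bigl[\Gamma_0(1) : \Gamma_0(N) \cap \Gamma_1(\ell)\bigr]
\end{eqnarray*}
(resp.\ with $\Gamma_1(4)$), which is exactly the Sturm bound for this deeper level group. Applying Lemma 1 to $f$ and $\tilde{g}$ in $M_{k_1}(\Gamma_0(N) \cap \Gamma_1(\ell),\chi)$ then yields $a_n \equiv b_n \pmod{\lambda}$ for every $n$, completing the argument.

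The main obstacle will be the construction of the weight-$1$ form $A$ satisfying $A \equiv 1 \pmod{\lambda}$, and in particular the verification that its $q$-expansion at the cusp $\infty$ reduces to $1$. The case $\ell = 2$ is especially delicate, as it forces one to leave level $\ell$ entirely and pass to $\Gamma_1(4)$; the rest of the argument is then a formal combination of this geometric input with Sturm's bound at the refined level.
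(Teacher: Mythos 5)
The paper itself offers no proof of this lemma --- it is quoted directly from Kohnen's article --- so there is nothing internal to compare against; your argument is in substance Kohnen's own proof, and it is correct in outline. Two points, however, are under-specified and deserve repair. First, the auxiliary form $A$: for $\ell>2$ the standard choice is the weight-one Eisenstein series $E_1^{1,\omega^{-1}}$ on $\Gamma_1(\ell)$ attached to the inverse Teichm\"uller character $\omega^{-1}$ (which is odd), normalised to have constant term $1$. The congruence $A\equiv 1\pmod{\lambda}$ then follows because $\sum_{a=1}^{\ell-1}\omega^{-1}(a)a\equiv \ell-1\pmod{\lambda}$ is a $\lambda$-unit, so $v_\lambda\bigl(L(0,\omega^{-1})\bigr)=v_\lambda\bigl(B_{1,\omega^{-1}}\bigr)=-1$ and the normalising factor $2/L(0,\omega^{-1})$ of the higher coefficients is divisible by $\lambda$; for $\ell=2$ one takes $\theta(z)^2=1+4q+\cdots$ on $\Gamma_1(4)$. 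Calling $A$ a ``lift of the Hasse invariant'' is misleading, since the Hasse invariant has weight $\ell-1$; all that is used is the $q$-expansion congruence. One must also enlarge $K$ to contain the values of $\omega^{-1}$ and choose a prime above $\lambda$ there. Second, the reduction to Lemma 1 is not literally licit as you state it: $A^{k_1-k_2}$ carries nebentypus $\omega^{-(k_1-k_2)}$, so $f$ and $\tilde g=gA^{k_1-k_2}$ do not in general lie in a common space $M_{k_1}(\Gamma_0(M),\chi)$ for any single character $\chi$; they only both lie in $M_{k_1}\bigl(\Gamma_0(N)\cap\Gamma_1(\ell)\bigr)$ with the character forgotten. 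You therefore need Sturm's theorem for an arbitrary congruence subgroup $\Gamma$, with bound $\frac{k}{12}[\Gamma_0(1):\Gamma]$ --- this is available in Sturm's paper but is strictly more general than Lemma 1 as stated here. With those two points made precise, your argument is complete and yields exactly the bound in the statement.
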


\begin{remark}\label{remark:weight}
We remark that Lemma $2$ holds even for $k_1=k_2$.
Indeed, because 
$[\Gamma_0(1):\Gamma_0(N)] \leq [\Gamma_0(1):\Gamma_0(N)\cap\Gamma_1(\ell)]$, 
applying Sturm's theorem if $k_1=k_2$, we can show that 
the first 
\begin{eqnarray*}
 \frac{{\rm max}\{k_1,k_2\}}{12}
[\Gamma_0(1):\Gamma_0(N)\cap\Gamma_1(\ell)]
\end{eqnarray*} 
coefficients determine the all Fourier coefficients 
(mod $\lambda$) of modular forms for 
every $k_1,k_2\geq 2$ and $\ell>2$. 
Similarly, also in the case of $\ell =2$, the first 
\begin{eqnarray*}
  \frac{{\rm max}\{k_1,k_2\}}{12}
[\Gamma_0(1):\Gamma_0(N)\cap\Gamma_1(4)]
\end{eqnarray*}
coefficients determine the all Fourier coefficients 
(mod $2$) of modular forms for every $k_1,k_2\geq 2$.
\end{remark}


\subsection{Galois representations.}

\subsubsection{$\ell$-adic and mod $\ell$ Galois representations.}
Let $\ell$ be a prime number, $d$ be a positive integer, 
and $L$ be a finite extension of $\mathbb{Q}_\ell$.
Then a $d$-dimensional $\ell$-adic Galois representation 
over $L$ is a continuous homomorphism 
\begin{eqnarray*}
 \rho : {\rm Gal}(\overline{\mathbb{Q}}/\mathbb{Q}) \longrightarrow GL_d(L).
\end{eqnarray*}
For two $d$-dimensional $\ell$-adic representations 
$\rho$ and $\rho'$, $\rho$ is isomorphic (or equivalent) to $\rho'$
(denoted by $\rho\simeq \rho'$)
if there is an element $A\in GL_d(L)$ such that 
$\rho(\sigma) = A^{-1}\rho'(\sigma)A$ for all 
$\sigma\in {\rm Gal}(\overline{\mathbb{Q}}/\mathbb{Q})$. 
$\rho$ is absolutely irreducible if 
for every finite extension $L'$ of $L$,
the composition representation $f\circ \rho$ is 
also irreducible, where $f:GL_d(L) \hookrightarrow GL_d(L')$. 
$\rho$ is odd if ${\rm det}(\rho(c))=-1$ for a 
complex conjugation 
$c\in {\rm Gal}(\overline{\mathbb{Q}}/\mathbb{Q})$.

Let $\mathbb{F}$ be a finite field or an algebraic 
closed field of characteristic $\ell$.
A $d$-dimensional mod $\ell$ Galois representation 
 over $\mathbb{F}$ is a continuous homomorphism 
$\rho:{\rm Gal}(\overline{\mathbb{Q}}/\mathbb{Q})$
$\to GL_d(\mathbb{F})$, where $GL_d(\mathbb{F})$ has a 
discrete topology. 
The notions of isomorphic, 
absolutely irreducible, and odd are defined as above.
Since ${\rm Gal}(\overline{\mathbb{Q}}/\mathbb{Q})$ is 
compact, $\rho$ has a finite image. 
Therefore, when $\mathbb{F} = \overline{\mathbb{F}}_\ell$, there is a finite subfield 
$\mathbb{F}'$ of $\mathbb{F}$ such that $\rho$ is defined over
$\mathbb{F}'$.
Let $\mathbb{F}$ be a field of characteristic $\ell$, and 
$V=\mathbb{F}^d$. 
For a $d$-dimensional mod $\ell$ representation 
$\rho:{\rm Gal}(\overline{\mathbb{Q}}/\mathbb{Q}) \to GL(V)$,
Artin conductor (outside $\ell$) 
$N(\rho)$ of $\rho$ is defined as follows:
\begin{eqnarray*}
 N(\rho) = \sum_{p\not = \ell: {\rm prime}} p^{n_p(\rho)}, \ 
n_p(\rho) = \sum_{i\geq 0} \frac{1}{(G_{p,0}: G_{p,i})}
\dim (V/V^{G_{p,i}}), 
\end{eqnarray*}
where $G_{p,i}$ is the $i$-th ramification group of the 
decomposition group at $p$. 
It is known that 
$n_p(\rho)$ is a non-negative integer. 
Thus $N(\rho)$ is also a positive integer.
We remark that the Artin conductor 
is relatively prime to $\ell$.

There is the fundamental fact on isomorphy of 
Galois representations.
\begin{lem}
 Let $R$ be a finite extension field of $\mathbb{Q}_\ell$ or 
a finite extension field of $\mathbb{F}_\ell$. 
Let $\rho,\rho': {\rm Gal}(\overline{\mathbb{Q}}/\mathbb{Q})$
$\longrightarrow GL_d(R)$ be two semisimple continuous Galois 
representations. Then if 
$\det(1 - \rho({\rm Frob}_p)T) = \det(1-\rho'({\rm Frob}_p)T) \in R[T]$ 
for all but finitely many prime number $p$, 
$\rho$ is isomorphic to $\rho'$.
\end{lem}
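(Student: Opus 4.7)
The plan is to reduce the hypothesis to an equality of characteristic polynomials on \emph{every} element of $\mathrm{Gal}(\overline{\mathbb{Q}}/\mathbb{Q})$, and then to invoke the Brauer--Nesbitt theorem. I would proceed in two steps.

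First, I would upgrade the hypothesis from ``all but finitely many Frobenius elements'' to ``all elements of $\mathrm{Gal}(\overline{\mathbb{Q}}/\mathbb{Q})$'' by means of the Chebotarev density theorem. In the mod $\ell$ case, $R$ is a finite field and $GL_d(R)$ is finite discrete, so $\rho$ and $\rho'$ both factor through a common finite Galois quotient $\mathrm{Gal}(K/\mathbb{Q})$; Chebotarev then realises every element of this quotient as $\mathrm{Frob}_p$ for some prime $p$ unramified in $K$, and we can always avoid the finite exceptional set of primes, so the characteristic polynomials of $\rho(\sigma)$ and $\rho'(\sigma)$ coincide for every $\sigma$. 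In the $\ell$-adic case, Chebotarev gives density of Frobenius elements in $\mathrm{Gal}(\overline{\mathbb{Q}}/\mathbb{Q})$ for the Krull topology; since $\rho,\rho'$ are continuous into $GL_d(R)$ with its $\ell$-adic topology and the characteristic polynomial map $GL_d(R)\to R[T]$ is continuous, equality on a dense subset propagates to the whole Galois group.

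Second, I would apply the Brauer--Nesbitt theorem: two semisimple representations of any group over any field are isomorphic as soon as the characteristic polynomials of the images of all group elements coincide. Applied to the semisimple $\rho$ and $\rho'$, this yields $\rho \simeq \rho'$ directly.

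The main subtlety I expect is in the mod $\ell$ case. In characteristic zero the equality of traces alone would already force isomorphism of semisimple representations, but in characteristic $\ell$ one genuinely needs the full characteristic polynomial (equivalently, the traces of all exterior powers) --- which is precisely why the hypothesis is phrased in terms of $\det(1-\rho(\mathrm{Frob}_p)T)$ rather than just the traces. This is exactly the data that the Chebotarev step transports to every element of $\mathrm{Gal}(\overline{\mathbb{Q}}/\mathbb{Q})$, after which Brauer--Nesbitt closes the argument without further work.
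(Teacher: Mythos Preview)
Your proposal is correct and is exactly the approach the paper takes: the paper's proof is the single sentence ``The lemma follows from Chebotarev's density theorem and Brauer and Nesbitt's theorem (cf.~\cite[Lem.~3.2]{Deligne-Serre}).'' Your write-up simply unpacks how Chebotarev is applied in the finite-image and $\ell$-adic cases, which is a reasonable elaboration of the same argument.
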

\begin{proof}
The lemma follows from Chebotarev's density theorem and Brauer and
 Nesbitt's theorem (cf. \cite[Lem.~3.2]{Deligne-Serre}).
\end{proof}

Next, we discuss mod $\ell$ reductions of 
$\ell$-adic representations.
Let $L$ be a finite extension of $\mathbb{Q}_\ell$, 
$\mathcal{O}_L$ be the ring of integers in $L$, 
and $\lambda$ be a maximal ideal in $\mathcal{O}_L$ such that 
$\lambda|\ell\mathcal{O}_L$.
Let $\rho: {\rm Gal}(\overline{\mathbb{Q}}/\mathbb{Q})$
$\to GL(V)$ be a $d$-dimensional  
$\ell$-adic Galois representation, where $V$ is a 
$d$-dimensional vector space over $L$. 
Then the following fact is known:
\begin{prop}
$\rho$ admits a Galois stable lattice
{\it i.e.,} 
there is a lattice $\mathcal{L} \subset V$ such that 
$\mathcal{L}$ is stable under 
${\rm Gal}(\overline{\mathbb{Q}}/\mathbb{Q})$.
\end{prop}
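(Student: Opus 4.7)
The plan is to construct $\mathcal{L}$ as a finite sum of translates of a naively chosen starting lattice. The key input is that $\rho$ is continuous and $\mathrm{Gal}(\overline{\mathbb{Q}}/\mathbb{Q})$ is compact, so the image $G := \rho(\mathrm{Gal}(\overline{\mathbb{Q}}/\mathbb{Q})) \subset GL(V)$ is a compact subgroup. The natural obstruction to stability of an arbitrary lattice disappears once we average over $G$, and compactness forces the average to be a \emph{finite} sum.

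Concretely, I would first fix an $L$-basis of $V$ and let $\mathcal{L}_0$ be its $\mathcal{O}_L$-span. The stabilizer
\begin{eqnarray*}
H := \{ g \in GL(V) : g \mathcal{L}_0 = \mathcal{L}_0 \}
\end{eqnarray*}
corresponds, under the identification $GL(V) \simeq GL_d(L)$ induced by this basis, to $GL_d(\mathcal{O}_L)$, which is a compact open subgroup of $GL_d(L)$ in the $\ell$-adic topology. Next, since $G$ is compact and $H \cap G$ is open in $G$, the index $[G : H \cap G]$ is finite. Pick coset representatives $g_1, \ldots, g_m$ and set
\begin{eqnarray*}
\mathcal{L} := \sum_{i=1}^{m} g_i \mathcal{L}_0.
\end{eqnarray*}

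Each $g_i \mathcal{L}_0$ is an $\mathcal{O}_L$-lattice (as $g_i \in GL(V)$), so $\mathcal{L}$ is a finitely generated $\mathcal{O}_L$-submodule of $V$, torsion-free, and it contains $\mathcal{L}_0$, hence spans $V$ over $L$. Therefore $\mathcal{L}$ is an $\mathcal{O}_L$-lattice. For Galois-stability, given any $g \in G$ the set $\{g g_1, \ldots, g g_m\}$ is another system of coset representatives for $G/(H\cap G)$, so one may write $g g_i = g_{\sigma(i)} h_i$ for some permutation $\sigma$ and elements $h_i \in H \cap G$; then
\begin{eqnarray*}
g \mathcal{L} = \sum_{i=1}^{m} g g_i \mathcal{L}_0 = \sum_{i=1}^{m} g_{\sigma(i)} h_i \mathcal{L}_0 = \sum_{i=1}^{m} g_{\sigma(i)} \mathcal{L}_0 = \mathcal{L},
\end{eqnarray*}
which gives the required $\mathrm{Gal}(\overline{\mathbb{Q}}/\mathbb{Q})$-stability.

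There is no real obstacle here; the entire content is the two soft observations that $GL_d(\mathcal{O}_L)$ is open in $GL_d(L)$ and that an open subgroup of a compact group has finite index. Everything else is a formal averaging argument.
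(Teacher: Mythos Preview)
Your proof is correct and is exactly the standard averaging/compactness argument. The paper itself gives no proof and simply cites \cite[Prop.~9.3.5]{Diamond-Shurman}, where essentially the same construction appears.
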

\begin{proof}
 For the proof, cf. \cite[Prop.~9.3.5]{Diamond-Shurman}.
\end{proof}


By choosing some basis which generates 
a Galois stable lattice in $V$ for $\rho$, 
we can take $GL_d(\mathcal{O}_L)$ as the image of $\rho$.
A reduction $\overline\rho$ of $\rho$ is defined as  
the composition $f\circ \rho$, where $f$ is a surjective continuous homomorphism 
$GL_d(\mathcal{O}_L)\to$ 
$GL_d(\mathcal{O}_L/\lambda)$. 
We remark that the semisimplification of $\overline\rho$ 
does not depend on any choice of Galois stable lattices.


\subsubsection{Modularity of 2-dimensional mod $\ell$ Galois
   representations.}
In this section, we discuss the theory of between modular forms and
$\ell$-adic and mod $\ell$ Galois representations.
In the beginning, we recall $\ell$-adic Galois representations 
associated to newforms.
Let $f\in S_k(\Gamma_0(N),\epsilon)$ be a normalised newform with Nebentypus $\epsilon$
(for the definition, cf. \cite[Def.~5.8.1]{Diamond-Shurman}) and 
$f = \sum a_n q^n (q = e^{2\pi i z})$ 
denotes the Fourier expansion of $f$. 
Let $K = \mathbb{Q}(\dots, a_n, \dots, \epsilon)$ 
be the field generated
by the all Fourier coefficients of $f$ and the values of $\epsilon$.
Then it is known that $K$ is a finite extension of 
$\mathbb{Q}$ and coefficients $a_n$ are 
in $\mathcal{O}_K$ (cf. \cite[Theorem $3.48$]{Shimura}). 
Let $\ell$ be a prime number such that $\ell\nmid N$ and 
$\lambda$ be a prime ideal of $\mathcal{O}_K$ such that 
$\lambda|\ell\mathcal{O}_K$. 
The $\ell$-adic Galois representation associated to $f$ is 
a $2$-dimensional representation 
$\rho = \rho_f:{\rm Gal}(\overline{\mathbb{Q}}/\mathbb{Q})$
$\to GL_2(\mathcal{O}_{K_\lambda})$ such that 
\begin{eqnarray*}
 {\rm Tr}(\rho({\rm Frob}_p)) &=& a_p, \\
 \det (\rho({\rm Frob}_p)) &=& \epsilon(p)p^{k-1}
\end{eqnarray*}
for all prime number $p$ satisfying $(p,\ell N)=1$, where 
$K_\lambda$ is the completion of $K$ at $\lambda$.
A mod $\ell$ Galois representation associated to 
$f$ is denoted by $\overline{{\rho}_f}$. 
We also define  
$\ell$-adic and mod $\ell$ Galois representations associated to 
Eisenstein series which are normalised eigenforms as well as 
newforms (cf. \cite[Th.~9.6.6]{Diamond-Shurman}). \\ \par

Next, We recall some facts of modularity of 2-dimensional mod $\ell$ 
Galois representations. Let 
$\rho:{\rm Gal}(\overline{\mathbb{Q}}/\mathbb{Q})\to$ 
$ GL_2(\overline{\mathbb{F}}_\ell)$ be a mod $\ell$ Galois representation. 
We assume that $\rho$ is semisimple, odd, and $N(\rho)|N$.

When $\rho$ is reducible, $\rho$ comes from an Eisenstein series. 
More precisely, we explain modularity in the reducible case. 
At first, we review some facts on Eisenstein series (for the details,
cf. \cite[Sect.~4.5]{Diamond-Shurman}). 
Let $N$, $u$ and $v$ be positive integers such that $uv|N$,  
$\psi$ (resp. $\phi$) be a (resp. primitive) mod $u$ (resp. $v$) 
Dirichlet character, 
and $k$ be an integer $k\geq 2$ such that $(\psi\phi)(-1)=(-1)^k$.
$E_k^{\psi,\phi}$ denotes the Eisenstein series defined in
\cite[Sect.~4.5, Sect.~4.6]{Diamond-Shurman}. 
We remark that $E_k^{\psi,\phi}$ is holomorphic if $k\geq 3$, 
but is not holomorphic if $\psi$ and $\phi$ are principal and $k=2$.  
$E_k^{\psi,\phi}$ has the following Fourier expansion: 
\begin{eqnarray*}
 E_k^{\psi,\phi}(z) =  \delta(\psi)L(1-k, \phi) + 
  2\sum_{n=1}^{\infty}\sigma_{k-1}^{\psi,\phi}(n)q^n,  
\end{eqnarray*}
where 
\begin{eqnarray*}
 \delta(\psi) = \left\{ \begin{array}{ll}
		1 & \mathrm{if} \ \psi 
		 \mathrm{ \ is \ a \ primitive \ character}, \\
		         0 & \mathrm{otherwise,} 
	\end{array}
\right. 
\end{eqnarray*}
and 
\begin{eqnarray*}
 \sigma_{k-1}^{\psi,\phi}(n) = \sum_{d|n,
  d>0}\psi\left(\frac{n}{d}\right)\phi(d) d^{k-1}.
\end{eqnarray*}
For a positive integer $t$ such that $tuv|N$, we set 
\begin{eqnarray*}
 E_k^{\psi,\phi,t}(z)= \left\{
                      \begin{array}{ll}
                        E_k^{\psi,\phi}(z) - t E_k^{\psi,\phi}(tz) 
			 & \mathrm{if} \ \psi \mathrm{\ and \ } \phi
			 \mathrm{\ are \ primitive \ and \ } k=2,   \\
		        E_k^{\psi,\phi}(tz) &  \mathrm{otherwise.}
                      \end{array}
\right.
\end{eqnarray*}
Then, $E_k^{\psi,\phi,t}(z)$ is a holomorphic modular form.
If $uv=N$ or $k=2$, $\psi=1$, $\phi=1$, $t$ is a prime and $N$ is a
power of $t$, then $ E_k^{\psi,\phi,t}$ is an eigenform for all Hecke operators.
Let $\rho$ be as above. We assume that $\rho$ is reducible. Then, we can
show that 
\begin{eqnarray*}
 \rho \simeq \left(\begin{array}{cc}
  \psi \chi_\ell^a & 0 \\ 
		           0         & \phi \chi_\ell^b
 		   \end{array}\right), 
\end{eqnarray*}
where $\psi$ and $\phi$ are Dirichlet characters such that $\psi\phi$ is
a mod $N$ Dirichlet character, 
$\chi_\ell$ is the cyclotomic character, and $a$ and $b$ are 
integers such that $0\leq a,b \leq \ell - 2$ (cf. \S.3.1). 
When $a=0$, $\rho$ comes from $\frac{1}{2}E_k^{\psi,\phi,t}$ for 
$2\leq k \leq \ell+ 1$ and $k \equiv b \mod{\ell - 1}$ (cf. \cite[Th.~9.6.7]{Diamond-Shurman}).
When $a=1, \dots, \ell -2$, for a positive integer $k$ such that 
$k\equiv b-a+1 \mod{\ell - 1}$ and $2\leq k \leq l+1$, 
$\rho$ comes from eigenform $\frac{1}{2}\theta^aE_{k}^{\psi,\phi,t}$. 
Here $\theta$ is the theta operator (cf. \cite[Sect.~3]{edixhoven1992weight}). 
We remark that the filtration $w(\frac{1}{2}\theta^aE_{k}^{\psi,\phi,t})$ of
$\frac{1}{2}\theta^aE_{k}^{\psi,\phi,t}$ is 
$2\leq w(\frac{1}{2}\theta^aE_{k}^{\psi,\phi,t})\leq \ell^2 - 1$ 
if $a = 1,2,\dots,\ell - 2$.

When $\rho$ is irreducible, 
Khare and Wintenberger proved the following theorem known as Serre's
modularity conjecture:

\theoremstyle{theorem}
\newtheorem{serreconj}{Theorem}
\renewcommand{\theserreconj}{}

\begin{serreconj}[Khare and Wintenberger {\cite{K-W}}]
 Let $\rho:$ ${\rm Gal}(\overline{\mathbb{Q}}/\mathbb{Q})$
$\to GL_2(\overline{\mathbb{F}}_\ell)$ be an  
irreducible odd $2$-dimensional mod $\ell$ Galois representation.
Then there is a newform 
$f\in S_{k(\rho)}(\Gamma_0(N(\rho)), \epsilon(\rho))$ 
such that $\rho$ is isomorphic to $\overline{\rho_f}$.
Here the integers $N(\rho),k(\rho)$, and the mod $N(\rho)$ Dirichlet character 
$\epsilon(\rho)$ are defined by Serre \cite[Sect.~1 and Sect.~2]{Serre}. 
\end{serreconj}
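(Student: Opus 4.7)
The final statement is Serre's modularity conjecture, so any ``proof proposal'' can only be a high-level sketch of the Khare--Wintenberger strategy; I will outline the architecture of the argument rather than the many delicate inputs it relies on.

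The plan is to argue by induction on a complexity invariant attached to $\rho$, for example the sum $N(\rho)+k(\rho)+\ell$, and to propagate modularity between different residue characteristics by building compatible systems of $\ell$-adic lifts and invoking modularity lifting theorems of Wiles/Taylor--Wiles/Kisin type. The overall structure has three pillars: (i) \emph{base cases}, where $\rho$ has very small Serre invariants, so that it either is already known to be modular by the work of Tate, Serre, Brueggeman, Schoof, and others, or it simply does not exist; (ii) \emph{modularity lifting}: if $\rho \simeq \overline{\rho_f}$ for some newform $f$ and $\rho_1$ is a geometric $\ell'$-adic lift of a twist/companion of $\rho$ whose residual representation is modular and satisfies the usual $R=T$ hypotheses (potentially diagonalizable, big image, distinguishedness at $\ell'$, etc.), then $\rho_1$ itself is modular; (iii) \emph{compatible systems and prime switching}: one produces, via potential modularity (Taylor's theorem, building on Moret-Bailly) and Dieulefait's construction, a strictly compatible system of $\lambda$-adic representations $\{\rho_\lambda\}$ containing a lift of $\rho$ at $\ell$, and then studies the residual representations $\overline{\rho_{\lambda'}}$ at other primes $\ell'$.

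First I would treat the base cases. For $\ell=2$, level $1$, and for the small weights and conductors excluded from any recursion, one invokes the classical results of Tate, Serre, and Schoof/Brueggeman that rule out, or directly exhibit, the relevant Galois representations; these anchor the induction. Next, given $\rho$ with $N(\rho)+k(\rho)+\ell$ larger than the base region, I would embed $\rho$ into a strictly compatible system of geometric $\ell'$-adic representations using potential modularity together with Böckle's deformation-theoretic construction: solvable base change yields a form over a totally real extension lifting $\rho$, and the associated motivic compatible system descends over $\mathbb{Q}$ in the weak sense needed for the argument. I would then choose an auxiliary prime $\ell'$ so that the mod $\ell'$ reduction $\overline{\rho_{\lambda'}}$ of this system has strictly smaller complexity invariant than $\rho$; the inductive hypothesis provides a newform witnessing modularity of $\overline{\rho_{\lambda'}}$, and a modularity lifting theorem (Kisin's in the potentially Barsotti--Tate case, together with the residually reducible and residually dihedral refinements of Skinner--Wiles and others) then upgrades this to modularity of $\rho_{\lambda'}$, hence of the whole compatible system, hence of $\rho$.

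The main obstacle is arranging the inductive step so that all the hypotheses of the available modularity lifting theorems are simultaneously satisfied at the auxiliary prime $\ell'$: the residual representation must have sufficiently large image (not dihedral in a bad way, not exceptional), the lift must be potentially diagonalizable or potentially Barsotti--Tate at $\ell'$, and the image of inertia at primes dividing $N$ must be controlled so that the level of the modular form matching $\overline{\rho_{\lambda'}}$ actually drops. Khare--Wintenberger handle this by a carefully designed sequence of prime-switching moves (``killing ramification'' and weight reduction), each move reducing one arithmetic invariant while preserving the modularity hypothesis, and by separately analyzing the reducible, dihedral, and small-image exceptional cases where generic lifting theorems fail. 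Completing the induction then yields the existence of $f \in S_{k(\rho)}(\Gamma_0(N(\rho)),\epsilon(\rho))$ with $\rho \simeq \overline{\rho_f}$, which is the theorem stated.
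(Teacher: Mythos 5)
This statement is not proved in the paper at all: it is the Khare--Wintenberger theorem (Serre's modularity conjecture), imported verbatim with a citation to \cite{K-W}, and the paper's ``proof'' is simply that citation. So there is nothing internal to compare your argument against. What you have written is a reasonable précis of the architecture of the actual Khare--Wintenberger proof --- induction on the Serre invariants, base cases supplied by Tate, Serre, Schoof and Brueggeman, construction of strictly compatible systems of lifts via potential modularity (Taylor/Moret-Bailly) and the deformation-theoretic existence results of B\"ockle, prime-switching \`a la Dieulefait to reduce weight and kill ramification, and modularity lifting theorems of Wiles--Taylor--Wiles--Kisin type (with the Skinner--Wiles residually reducible refinements) to propagate modularity back up. That matches the published strategy in outline, with only minor inaccuracies of emphasis (the actual induction is organized first as an induction on $\ell$ for level one, then on the number of primes dividing the conductor, rather than literally on $N(\rho)+k(\rho)+\ell$, though you hedge on this point).

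However, you should be clear that this is a survey of someone else's proof, not a proof: every substantive step --- potential modularity, the existence of minimal lifts, each modularity lifting theorem, the analysis of the dihedral and small-image cases --- is a deep theorem in its own right that you invoke rather than establish. In the context of this paper that is exactly the right thing to do, and the correct ``proof'' to supply here is the one the author supplies: a citation to \cite{K-W}. No gap, but also no new content beyond what the citation already carries.
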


\begin{remark}\label{remark:modularity}
By the definitions in \cite[Sect.~2]{Serre}, 
$2\leq k(\rho)\leq \ell^2 - 1$ if $\ell >2$ and 
$k(\rho)=2$ or $4$ if $\ell = 2$. 
We remark that both the reducible case and the irreducible case, 
we can take a modular form corresponding to $\rho$ of weight $k$ 
such that $2\leq k \leq \ell^2-1$ if $\ell>2$ and $k=2$ or $4$ if 
$\ell = 2$. 
\end{remark}

\section{One and two-dimensional cases.}
\subsection{One-dimensional case.}
Let $\ell$ be a prime number, and  
$N$ be a positive integer such that $\ell \nmid N$.
Let $\rho: $ ${\rm Gal}(\overline{\mathbb{Q}}/\mathbb{Q})$
$\to GL_1(\overline{\mathbb{F}}_\ell)$ be a $1$-dimensional 
mod $\ell$ Galois representation of the Artin conductor $N$.
Since $\rho$ has a finite image, 
there is a finite abelian extension $F$ over $\mathbb{Q}$ such that 
${\rm image}(\rho) \simeq {\rm Gal}(F/\mathbb{Q})$.
Thus, by Kronecker-Weber theorem, there is a positive integer 
$M$ and we have the factorisation 
$\rho = \rho_\chi = \rho_{\chi,M}\circ \pi_M$, 
where $\rho_{\chi,M}:{\rm Gal}(\mathbb{Q}(\zeta_M)/\mathbb{Q})$ 
$\simeq (\mathbb{Z}/M\mathbb{Z})^\times \stackrel{\chi}{\to}$
$\overline{\mathbb{F}}_\ell^{\times}$
and 
$\pi_M : {\rm Gal}(\overline{\mathbb{Q}}/\mathbb{Q}) \to$
${\rm Gal}(\mathbb{Q}(\zeta_M)/\mathbb{Q})$. 
Since $\rho$ is tamely ramified at $\ell$, 
we may take $M = \ell N$ where $N$ is the Artin conductor of 
$\rho$. In particular, $\rho$ comes from a mod $\ell N$ 
character.

Let $\rho$ and $\rho'$ be two $1$-dimensional mod $\ell$ Galois
 representations of Artin conductors $N$, and $\chi$ and $\chi'$
 be two mod $\ell N$ Dirichlet characters such that 
$\rho=\rho_\chi$ and  $\rho'=\rho_{\chi'}$. 
On the $1$-dimensional case of the problem, we claim that 
there is a positive number $\kappa=\kappa(N,\ell)$ such that 
if $\rho({\rm Frob}_p) = \rho'({\rm Frob}_p)$ in 
$\overline{\mathbb{F}}_\ell^\times$ for every prime number $p$ 
satisfying $(p,\ell N)=1$ and $p\leq \kappa$, 
then $\rho\simeq \rho'$. 

First, we discuss a trivial estimate.
If $\rho_\chi \not\simeq \rho_{\chi'}$, $(\chi/\chi')(n)\neq 1$ for an
integer $n$ such that $1 < n <\ell N$. Then there is a prime $p|n$ such
that $(\chi/\chi')(p)\neq 1$.  
Thus we can take $\kappa = \ell N$. It is a trivial estimate for
$\kappa$.

Using Burgess' estimate for character sums \cite{Burgess}, 
we obtain a better estimate for $\kappa$. 
Let $d$ be a positive integer, $M$ be an integer such that $0<M<d$, and
 $\chi_0$ be a non-trivial mod $d$ Dirichlet character. According to
 Burgess \cite[Th.~2]{Burgess}, 
\begin{eqnarray*}
 |\sum_{n=1}^{M} \chi_0(n)| \ll_{r,\varepsilon}
  M^{1-\frac{1}{r}}d^{\frac{r+1}{4r^2}+\varepsilon} 
\end{eqnarray*}
for every positive integer $r$ and every positive number $\varepsilon$.
This inequality means that 
\begin{eqnarray*}
 |\sum_{n=1}^{M} \chi_0(n)| < c M^{1-\frac{1}{r}}d^{\frac{r+1}{4r^2}+\varepsilon} 
\end{eqnarray*}
for every positive integer $r$ and every positive number 
$\varepsilon$ with a positive constant $c$ 
depending on $r$ and $\varepsilon$.
When $M>c^rd^{\frac{r+1}{4r}+r\varepsilon}$, 
$c M^{1-\frac{1}{r}}d^{\frac{r+1}{4r^2}+\varepsilon}<M$.
Thus $\chi_0(n) \neq 1$ for $0< n < c^rd^{\frac{r+1}{4r}+r\varepsilon}+1$.
In our case, applying Burgess' result with $d=\ell N$ and 
$\chi_0 =\chi/\chi'$,   
we can take $\kappa = c^r(\ell N)^{\frac{r+1}{4r}+r\varepsilon}+1$ with
the above $c,r,\varepsilon$. Thus we obtain the estimate 
\begin{eqnarray*}
 \kappa \ll_{r,\varepsilon} (\ell N)^{\frac{r+1}{4r}+r\varepsilon}.
\end{eqnarray*}

On the estimate of character sums, 
it is conjectured that the bound is some polynomial 
order of the logarithm. 
Indeed, Ankeny \cite[Th.~2]{Ankeny} proved, 
under GRH, the following estimate:
\begin{eqnarray*}
 |\sum_{n=1}^{M} \chi_0(n)|  \ll (\log (\ell N))^2. 
\end{eqnarray*}
Using this, we obtain
\begin{eqnarray*}
 \kappa \ll (\log (\ell N))^2
\end{eqnarray*} 
under GRH.


\subsection{Two-dimensional case.}
In the $2$-dimensional case, 
we prove the following main result:
\begin{thm}
Let $\ell$ be a prime number and $N$ be a positive integer such that 
$\ell \nmid N$. 
Let $\rho,\rho'
:{\rm Gal}(\overline{\mathbb{Q}}/\mathbb{Q})$
$\to GL_2(\overline{\mathbb{F}}_\ell)$ be two semisimple  
$2$-dimensional mod $\ell$ Galois representations 
with Artin conductor dividing $N$.
Assume that $\rho$ is odd.
Let 
\begin{eqnarray*}
 \kappa = \kappa(N, \ell) = \left\{\begin{array}{cc}
			     \ds\frac{\ell(\ell^2 - 1)^2}{12}
                NN'\prod_{p|N}\left(1+\ds\frac{1}{p}\right) &  \mathrm{if} \ \ell > 2, \\
		 4NN' \prod_{p|N}\left(1+\ds\frac{1}{p}\right) &
				    \mathrm{if} \ \ell = 2.  \end{array}
			     \right.
\end{eqnarray*}
where $N'=\prod_{p|N, \  p^2 \nmid N}p$.
If  
\begin{eqnarray*}
\det(1- \rho({\rm Frob}_p)T) = \det(1- \rho'({\rm Frob}_p)T) \quad
\mbox{ in $\overline{\mathbb{F}}_\ell[T]$} 
\end{eqnarray*}
for 
every prime number $p$ satisfying $p\leq\kappa$ and $p \nmid \ell N$, $\rho$ is isomorphic to $\rho'$.
\end{thm}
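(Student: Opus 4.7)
The plan is to realise both $\rho$ and $\rho'$ as reductions of $\ell$-adic representations attached to eigenforms, and then compare those eigenforms modulo $\lambda$ using Kohnen's refinement of Sturm's bound. I dispose of the determinant first: $\det\rho$ and $\det\rho'$ are $1$-dimensional mod $\ell$ Galois representations of conductor dividing $N$, and the hypothesis (restricted to this setting) provides their equality on every $\mathrm{Frob}_p$ with $p \le \kappa$ and $p \nmid N\ell$. Since $\kappa \ge \ell N$, the $1$-dimensional analysis of \S3.1 forces $\det\rho = \det\rho'$; in particular $\rho'$ is also odd, and its attached Nebentypus agrees with that of $\rho$.

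Next, Khare--Wintenberger in the irreducible case and the Eisenstein/$\theta$-twist description of \S2.2.2 in the reducible case produce normalised eigenforms $f, g$ of common Nebentypus $\chi$ with $\overline{\rho_f} \simeq \rho$ and $\overline{\rho_g} \simeq \rho'$, of levels dividing $N$ and weight in $[2, \ell^2 - 1]$ if $\ell > 2$ (in $\{2, 4\}$ if $\ell = 2$) by Remark~\ref{remark:modularity}. Applying the operator $\pi$ from \S2.1.1 yields $\pi(f), \pi(g)$ in $M_{\bullet}(\Gamma_0(NN'), \chi)$ whose Fourier expansions are supported on indices coprime to $N$. The Hecke recurrence
\[
  a_{p^{r+1}} = a_p\, a_{p^r} - \chi(p)\, p^{k-1}\, a_{p^{r-1}} \qquad (p \nmid N)
\]
lets me translate the hypothesis $\mathrm{Tr}(\rho(\mathrm{Frob}_p)) = \mathrm{Tr}(\rho'(\mathrm{Frob}_p))$ (for $p \le \kappa$, $p \nmid N\ell$) into congruences $a_n(f) \equiv a_n(g) \pmod{\lambda}$ for every $n$ coprime to $N\ell$ whose prime divisors are all $\le \kappa$.

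The step I expect to be the main obstacle is recovering the $\ell$-th Fourier coefficient. For indices $n \le B' := \frac{(\ell^2-1)^2}{12} NN' \prod_{p \mid N}(1 + 1/p)$ that are coprime to $N$ but divisible by $\ell$, the Hecke recurrence at $p = \ell$ degenerates modulo $\lambda$ to $a_{\ell^{r+1}} \equiv a_\ell\, a_{\ell^r}$ (since $\ell^{k-1} \equiv 0 \pmod{\lambda}$ for $k \ge 2$), so I still need $a_\ell(f) \equiv a_\ell(g) \pmod{\lambda}$. It is precisely for this purpose that the factor of $\ell$ is built into the definition of $\kappa$: using the hypothesis up to $\ell B'$, together with the filtration bound $w \le \ell^2 - 1$ recalled at the end of \S2.2.2, one pins $a_\ell$ down mod $\lambda$. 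Granting this, the first $B'$ Fourier coefficients of $\pi(f)$ and $\pi(g)$ agree modulo $\lambda$.

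Finally, Kohnen's lemma (Lemma~2 combined with Remark~\ref{remark:weight}) applied to $\pi(f), \pi(g)$---for which the relevant level index is $[\Gamma_0(1) : \Gamma_0(NN') \cap \Gamma_1(\ell)] = (\ell^2 - 1)\, NN' \prod_{p \mid N}(1 + 1/p)$ and the maximal weight is at most $\ell^2 - 1$---upgrades the mod-$\lambda$ agreement on the first $B'$ coefficients to $a_n(f) \equiv a_n(g) \pmod{\lambda}$ for every $n$ coprime to $N$. In particular $\mathrm{Tr}(\rho(\mathrm{Frob}_p)) = \mathrm{Tr}(\rho'(\mathrm{Frob}_p))$ for every prime $p \nmid N\ell$, and combined with $\det\rho = \det\rho'$, Lemma~3 (Chebotarev plus Brauer--Nesbitt) yields $\rho \simeq \rho'$. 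The case $\ell = 2$ runs analogously with $\Gamma_1(\ell)$ replaced by $\Gamma_1(4)$ and weight restricted to $\{2, 4\}$.
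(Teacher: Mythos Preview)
Your overall strategy matches the paper's, but there is a genuine gap at exactly the point you flagged: the coefficient $a_\ell$. The hypothesis of the theorem concerns only primes $p\nmid \ell N$, so enlarging the range from $B'$ to $\ell B'$ gives you no new information whatsoever about $a_\ell(f)$ or $a_\ell(g)$; the filtration bound $w\le \ell^2-1$ does not help either, since it constrains the weight, not the value of $a_\ell$. Your sentence ``one pins $a_\ell$ down mod $\lambda$'' is not backed by any actual mechanism, and indeed there is none along the lines you suggest. Consequently you cannot conclude $a_n(\pi f)\equiv a_n(\pi g)\pmod\lambda$ for \emph{all} $n\le B'$, and Kohnen's lemma does not apply.

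The paper's proof sidesteps this entirely, and this is where the factor of $\ell$ in $\kappa$ really comes from. Rather than working at level $N$, the paper regards $f$ and $g$ as eigenforms of level $\ell N$ and applies the operator $\pi$ of \S2.1.1 with respect to that level. Since $\ell\mid \ell N$, the operator $\pi$ now annihilates every Fourier coefficient at an index divisible by $\ell$, so $a_\ell(\tilde f)=a_\ell(\tilde g)=0$ automatically and the problematic coefficient never arises. The price is that $\tilde f,\tilde g$ live in $M_\bullet(\Gamma_0(\ell^2 NN'),\epsilon)$ rather than $M_\bullet(\Gamma_0(NN'),\epsilon)$, and the Kohnen bound becomes
\[
\frac{\ell^2-1}{12}\,[\Gamma_0(1):\Gamma_0(\ell^2 NN')\cap\Gamma_1(\ell)]
=\frac{\ell^2-1}{12}\cdot \ell(\ell^2-1)\,NN'\prod_{p\mid N}\Bigl(1+\tfrac1p\Bigr)=\kappa.
\]
This is the honest origin of the extra factor $\ell$ in $\kappa$: it is the cost of raising the level to $\ell^2 NN'$, not a device for recovering $a_\ell$.
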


\begin{proof}
First, we prove that $\rho'$ is odd.
By the assumption, 
\begin{eqnarray*}
 \det(\rho({\rm Frob}_p)) =\det(\rho'({\rm Frob}_p)) 
\end{eqnarray*}
for every prime $p$ satisfying $p<\ell N <\kappa$ and 
$p\nmid \ell N$. Thus, by the trivial estimate on 
the $1$-dimensional case, 
\begin{eqnarray*}
 \det(\rho) =\det(\rho') 
\end{eqnarray*}
holds for every prime number $p$ satisfying $p\nmid \ell N$. 
Thus, $\rho'$ is also odd.
By Remark \ref{remark:modularity}, semisimple odd continuous 
$2$-dimensional mod $\ell$
 Galois representations come from Hecke eigenforms.
Because $N(\rho),N(\rho')|N$ and $N|\ell N$, 
we can take the appropriate eigenform $f\in$
$M_{k_1}(\Gamma_0(\ell N),\epsilon)$
(resp. $g\in$
$M_{k_2}(\Gamma_0(\ell N),\epsilon)$)
such that $\rho \simeq \overline{\rho_f}^{ss}$ 
 (resp. $\rho' \simeq \overline{\rho_g}^{ss}$),  
$2 \leq k_1,k_2 \leq \ell^2 - 1$ if $\ell>2$, and $k_1,k_2=2,4$ if
 $\ell=2$. Here $\overline{\rho_f}^{ss}$ is the semisimplification of $\overline{\rho_f}$.

Next, 
we apply the operator $\pi$ defined in \S.~$2.1.1$. 
We set $\tilde{f}=\pi(f)(z)\in M_{k_1}(\Gamma_0(\ell^2 NN'),\epsilon)$ and 
$\tilde{g}=\pi(g)(z)\in M_{k_2}(\Gamma_0(\ell62 NN'),\epsilon)$.
Let $\tilde{f}(z) = \sum_{n=1}^{\infty}a_n q^n$ and
 $\tilde{g}(z) = \sum_{n=1}^{\infty}b_n q^n$ be their Fourier expansions.
Remark that $a_n=b_n=0$ for all $n$ such that $(n,\ell N)>1$ and 
\begin{eqnarray*}
 &&\left\{a_{mn}=a_m a_n \atop b_{mn}=b_mb_n \right. \quad \mbox{ for all $(m,n)=1$}, \\
 &&\left\{ a_{p^n} = a_{p^n-1}a_p +
  \epsilon(p)p^{k_1-1}a_{p^{n-2}} \atop
b_{p^n} = b_{p^n-1}b_p +
  \epsilon(p)p^{k_2-1}b_{p^{n-2}}
  \right. 
\end{eqnarray*}
for every prime number $p$ such that $p\nmid \ell N$.
Let $K_f = \mathbb{Q}(\dots, a_n, \dots, \epsilon)$ 
(resp. $K_g = \mathbb{Q}(\dots, b_n, \dots,\epsilon)$) be the field generated by
 the all Fourier coefficients of $f$ (resp. $g$) and the values of $\epsilon$, and $\mathcal{O}_{K_f}$
 (resp. $\mathcal{O}_{K_g}$) 
be the ring of integers of $K_f$ (resp. $K_g$).
Let $\lambda_f$ (resp. $\lambda_g$) be a maximal ideal in
 $\mathcal{O}_{K_f}$ (resp. $\mathcal{O}_{K_g}$) such that
 $\lambda_f | \ell\mathcal{O}_{K_f}$ 
(resp.  $\lambda_g | \ell\mathcal{O}_{K_g}$) and 
$\mathbb{F}_f =\mathcal{O}_{K_{f,\lambda_f}}/\lambda_f$ 
(resp. $\mathbb{F}_g =\mathcal{O}_{K_{f,\lambda_g}}/\lambda_g$).
Then 
\begin{eqnarray*}
 \det(1- \rho({\rm Frob}_p)T) &=& 
1 - a_pT + \epsilon(p)p^{k_1 -1}T^2 \quad 
\mbox{in $\mathbb{F}_f[T]$},\\
\det(1- \rho'({\rm Frob}_p)T) &=& 
1 - b_pT + \epsilon(p)p^{k_2-1}T^2 
\quad \mbox{in $\mathbb{F}_g[T]$}
\end{eqnarray*}
for every prime $p$ such that $p\nmid \ell N$.
Let $L$ 
be the Galois closure of the composite of $K_f$ and $K_g$, 
$\mathcal{O}_L$ be the ring of the integers of 
 $L$, and $\lambda$ be a maximal ideal in $\mathcal{O}_L$ 
such that
 $\lambda|\lambda_f \mathcal{O}_L$ and 
$\lambda|\lambda_g \mathcal{O}_L$, and
$\mathbb{F} =\mathcal{O}_{L,\lambda}/\lambda$.
Then 
\begin{eqnarray*}
 \det(1- \rho({\rm Frob}_p)T) &=& 
1 - a_pT + \epsilon(p)p^{k_1-1}T^2 
\quad \mbox{in $\mathbb{F}[T]$}, \\
\det(1- \rho'({\rm Frob}_p)T) &=& 
1 - b_pT + \epsilon(p)p^{k_2-1}T^2 
\quad \mbox{in $\mathbb{F}[T]$}
\end{eqnarray*}
for all prime $p$ such that $p\nmid \ell N$.
\par

On the assumption, we have
\begin{eqnarray*}
  a_p \equiv b_p \pmod{\lambda}
\end{eqnarray*}
for every prime $p$ such that $p\nmid \ell N$ and 
$p\leq\kappa$ (for $\kappa$ in Theorem $1$).
Because $a_{mn} = a_ma_n, b_{mn}= b_mb_n$ for $(m,n)=1$ and 
$a_{p^n} \equiv b_{p^n} \pmod{\lambda}$ for every prime number $p$ such that 
$p\leq \kappa$ (indeed, for such prime $p$,   
\begin{eqnarray*}
 a_{p^2} &=& a_p^2 + \epsilon(p)p^{k_1-1}a_1 \\
 &\equiv& a_p^2 + \det(\rho({\rm Frob}_p)) \pmod{\lambda} \\
 &\equiv& b_p^2 +  \det(\rho'({\rm Frob}_p)) \pmod{\lambda} \\
 &\equiv& b_p^2 + \epsilon(p)p^{k_2-1}b_1  \pmod{\lambda} \\
 &=& b_{p^2} 
\end{eqnarray*}
and by the induction), we have 
\begin{eqnarray*}
 a_n \equiv b_n \pmod{\lambda}
\end{eqnarray*}{\large }
for every $n$ such that $n \leq \kappa$. 
While it is easy to check that 
\begin{eqnarray*}
 \kappa = \left\{\begin{array}{cc}
	   \ds\frac{\ell^2 - 1}{12}[\Gamma_0(1): \Gamma_0(\ell^2 NN')\cap
	    \Gamma_1(\ell)] & \mathrm{if \ } \ell >2, \\ 
		  \ds\frac{4}{12}[\Gamma_0(1): \Gamma_0(4 NN')\cap
		   \Gamma_1(4)] & \mathrm{if \ } \ell =2.
		 \end{array}\right. 
\end{eqnarray*}
By Lemma $1$ and Lemma $2$ 
and the fact that 
$2 \leq k(\rho),k(\rho') \leq \ell^2 - 1$ if $\ell >2$ and
 $k(\rho),k(\rho') =2$ or $4$ if $\ell =2$, 
we have $\tilde{f} \equiv \tilde{g} \pmod{\lambda}$. 
It means that $a_n \equiv b_n \pmod{\lambda}$ for all $n$. 
Therefore ${\rm Tr}(\rho({\rm Frob}_p))  \equiv $ 
${\rm Tr}(\rho'({\rm Frob}_p))\pmod{\lambda}$  for every prime $p$ such that 
$p \nmid \ell N$.
By Lemma $3$, $\rho$ is isomorphic to $\rho'$. 
%
\end{proof}

By Theorem 1, we have estimate
\begin{eqnarray*}
 \kappa \ll \ell^5N^2\log{N}.
\end{eqnarray*}
Comparing with the estimate under GRH 
on the $1$-dimensional 
case in section 3.1, it is clear the above estimate 
is very large. 
We guess that the estimate for $\kappa$ 
can be improved in the $2$-dimensional case. 
In general, we guess that we can take 
some polynomial of $\log{\ell N}$ as 
the upper bound for $\kappa$
in the arbitrary dimensional case. 
Thus we ask the following question: 
\begin{question}
 Let $n$ be a positive integer, 
and $\kappa$ be the positive number in the problem 
of the $n$-dimensional case. 
Then can we take 
\begin{eqnarray*}
 \kappa =(\log{\ell N})^d
\end{eqnarray*}
for some positive integer $d$?
\end{question}











\bibliographystyle{alpha}
\bibliography{takai}

\end{document}